\newtheorem{lemma}{\bf Lemma}
\newtheorem{corollary}{\bf Corollary}
\newcommand{\prob}[1]{\mathrm{Prob}\left[#1\right]}
\newcommand{\expect}[1]{\mathbb{E}\left[#1\right]}
\newcommand{\tr}[1]{\mathrm{tr}\left(#1\right)}
 \renewcommand{\vec}[1]{\mathrm{vec}\left(#1\right)}
\begin{document}
\IEEEoverridecommandlockouts
\IEEEpubid{\begin{minipage}{\textwidth}\ \\[12pt] \\ \\
         \copyright 2022 IEEE.  Personal use of this material is  permitted.  Permission from IEEE must be obtained for all other uses, in  any current or future media, including reprinting/republishing this material for advertising or promotional purposes, creating new  collective works, for resale or redistribution to servers or lists, or  reuse of any copyrighted component of this work in other works.
     \end{minipage}}

\title{\LARGE \bf State space models vs. multi-step predictors in predictive control:\\
Are state space models complicating safe data-driven designs?} 
\author{Johannes K\"ohler$^1$, Kim P. Wabersich$^1$,  Julian Berberich$^2$, Melanie N. Zeilinger$^1$%
\thanks{$^1$Institute for Dynamic Systems and Control, ETH Zürich, Zürich CH-8092, Switzerland (e-mail:
[jkoehle@ethz.ch).}
\thanks{$^2$Institute for Systems Theory and Automatic Control, University of Stuttgart, 70550 Stuttgart, Germany.}
}
\maketitle
%
\begin{abstract}
This paper contrasts recursive state space models and direct multi-step predictors for linear predictive control. 
We provide a tutorial exposition for both model structures to solve the following problems: 1. stochastic optimal control; 2. system identification; 3. stochastic optimal control based on the estimated model. 
Throughout the paper, we provide detailed discussions of the benefits and limitations of these two model parametrizations for predictive control and highlight the relation to existing works.  
Additionally, we derive a novel (partially tight) constraint tightening for stochastic predictive control with parametric uncertainty in the multi-step predictor.
\end{abstract} 
\section{Introduction}
\label{sec:intro} 
Model predictive control (MPC) is an optimization-based control strategy, which is applicable to general MIMO systems and 
directly accounts for state and input constraints~\cite{rawlings2017model}. 
Typically, first a parametric prediction model is identified.
In addition to noise and disturbances, the resulting parametric error then needs to be considered to ensure satisfaction of safety critical constraints. 
In this paper, we study data-driven predictive control problems using two different model parametrizations: state space models and \textit{multi-step predictors}, i.e., models that skip the sequential state propagation and directly predict $k$-steps into the future.
\subsubsection*{Related work} 
Historically, MPC emerged from the process control industry using impulse/step response models, which are simple to identify/adapt~\cite{richalet1978model,cutler1980dynamic}. 
These approaches have early been extended to robustly account for the parametric uncertainty of such finite impulse response (FIR) models~\cite{zheng1993robust}. 
However, the underlying assumptions result in intrinsic limitations~\cite{lundstrom1995limitations}, and hence, 
over the last three decades, state space models have become the de facto standard in (theoretical) research on MPC~\cite{mayne2000constrained,kouvaritakis2016model,rawlings2017model}. 
A variety of results are available to ensure robustness w.r.t. parametric uncertainty in state space models by using \textit{tube-based} approaches~\cite{primbs2009stochastic,fleming2014robust,lorenzen2019robust,Koehler2019Adaptive,lu2021robust,kouvaritakis2016model,schwenkel2021model}. 
More recently, a number of predictive control approaches have been proposed that directly exploit \textit{multi-step predictors}, as also done in early MPC approaches~\cite{richalet1978model,cutler1980dynamic,zheng1993robust}:
\begin{itemize}
\item dual/adaptive MPC using FIR models or more general orthonormal basis functions (OBFs~\cite{heuberger1995generalized}) in~\cite{tanaskovic2014adaptive,heirung2017dual,soloperto2019dual};
\item \textit{data-driven/enabled predictive control} based on an implicit model characterization using Hankel matrices~\cite{berberich2020data,coulson2021distributionally,huang2021robust,dorfler2021bridging}, compare also~\cite{yin2021maximum,yin2021data,iannelli2021experiment}; 
\item set-membership estimation for multi-step predictors in MPC~\cite{terzi2019learning,lauricella2020data,terzi2022robust};
\item feedback optimization in MPC using \textit{system responses}~\cite{chen2020robust,furieri2021near}, compare also~\cite{bujarbaruah2020robust};
\item nonlinear MPC using parametric~\cite{gruber2013computationally,genecili1995design} and non-parametric~\cite{maddalena2021kpc,thorpe2022data} multi-step predictors.
\end{itemize}
The main commonality is that multi-step predictors allow for simple bounds on the prediction error, compare Section~\ref{sec:data_driven_4} for a more detailed discussion of the individual approaches. 
More generally, forecasting problems (e.g., based on machine learning) are similarly divided between \textit{sequential/recursive} strategies and \textit{direct/independent} strategies~\cite{taieb2012review}.
\subsubsection*{Contribution}
We consider a stochastic optimal control problem for linear systems, as typically arising in MPC, and provide a tutorial-style exposition based on state space models and multi-step predictors, respectively.\footnote{%
Although MPC relies on a receding horizon implementation, we initially focus on the open-loop problem to simplify the exposition. Closed-loop implementations with corresponding caveats are discussed in Section~\ref{sec:data_driven_3}.} 
First, we reformulate this problem as equivalent quadratic programs (QPs) for both parametrizations (Sec.~\ref{sec:problem}).
Then, we use a maximum likelihood estimate (MLE) for the system identification (Sec.~\ref{sec:SysID}). 
Finally, we study the stochastic optimal control problem with uncertain parameter estimates and derive a novel data-driven stochastic MPC formulation (Sec.~\ref{sec:data_driven}). 
Therein, we demonstrate that multi-step predictors allow for a simpler (partially tight) reformulation to account for the parametric uncertainty.
On the other hand, accounting for parametric uncertainty in state space models typically requires a \textit{sequential propagation}~\cite{primbs2009stochastic,fleming2014robust,lorenzen2019robust,Koehler2019Adaptive,lu2021robust,kouvaritakis2016model}, which can result in significant conservatism. 
Throughout the paper, we provide extensive discussions on the advantages and limitations of the two parametrizations (Sec.~\ref{sec:problem_4}, \ref{sec:SysID_3}, \ref{sec:data_driven_3}), and the relation to existing work (Sec.~\ref{sec:data_driven_4}).
\IEEEpubidadjcol
\subsubsection*{Notation} 
The probability of an event $A$ and the expected value of a random variable $x$ are denoted by $\prob{A}$ and $\expect{x}$, respectively. 
We denote a variable $w$ with Gaussian distribution with mean $\mu$ and variance $\Sigma$ by $w\sim\mathcal{N}(\mu,\Sigma)$. 
The quantile function of the chi-squared distribution with $n$ degrees of freedoms is given by $\chi_n^2(p)$ with probability $p\in[0,1]$. 
The set of integers in the interval $[a,b]\subseteq\mathbb{R}$ is denoted by $\mathbb{I}_{[a,b]}$. 
For a sequence of matrices $x_t\in\mathbb{R}^{n\times m}$, $t\in\mathbb{I}_{[t_1,t_2]}$, $t_1,t_2\in\mathbb{I}_{\geq 0}$, we denote the stacked matrix by $x_{[t_1,t_2]}\in\mathbb{R}^{n(t_2+1-t_1)\times m}$. 
The trace of a square matrix $A$ is denoted by $\tr{A}$. 
By $\vec{A}\in\mathbb{R}^{nm}$ we denote a vector that stacks the columns of the matrix $A\in\mathbb{R}^{n\times m}$. 
The Kronecker product is denoted by $\otimes$. 
The identity matrix is given by $I_n\in\mathbb{R}^{n\times n}$ and a matrix of zeros by $0_{n\times m}\in\mathbb{R}^{n\times m}$. 
We denote a block-diagonal matrix with square matrices $A\in\mathbb{R}^{n\times n}$ on its block-diagonal by $\mathrm{diag}_k(A)\in\mathbb{R}^{kn\times kn}$, $k\in\mathbb{I}_{\geq 0}$. 
Positive definiteness of a matrix $Q$ is denoted by $Q\succ 0$.
We denote the symmetric matrix square-root by $A^{1/2}$ for $A\succ 0$.
We write $\|x\|_Q^2=x^\top Q x$ for $Q\succ 0$.

\section{Stochastic predictive control}
\label{sec:problem}
We first state the control problem (Sec.~\ref{sec:problem_1}) and convert it into a deterministic QP (Sec.~\ref{sec:problem_2}). 
Then, we introduce the multi-step predictors, derive an equivalent QP (Sec.~\ref{sec:problem_3}), and provide a discussion (Sec.~\ref{sec:problem_4}). 

\subsection{Problem setup}
\label{sec:problem_1}
We consider a linear discrete-time system of the form
\begin{align}
\label{eq:sys_state}
x_{k+1}=Ax_k+Bu_k+Ew_k,\quad k\in\mathbb{I}_{\geq 0},
\end{align}
with state $x_k\in\mathbb{R}^n$, input $u_k\in\mathbb{U}\subseteq\mathbb{R}^m$, disturbances $w_k\in\mathbb{R}^q$, time $k\in\mathbb{I}_{\geq 0}$, and input constraint set $\mathbb{U}$. 
The disturbances are i.i.d. Gaussian variables $w_k\sim\mathcal{N}(0,\Sigma_{\mathrm{w}})$ and the initial state is uncertain with $x_0\sim\mathcal{N}(\overline{x}_0,\Sigma_{\mathrm{x},0})$, which is independent of $w_k$. 
We consider the following stochastic optimal control problem
\begin{subequations}
\label{eq:stoch_opt}
\begin{align}
&\min_{u_{[0,N-1]}\in\mathbb{U}^N}\expect{\sum_{k=0}^{N-1}\|x_{k+1}\|_Q^2+\|u_k\|_R^2}\\
\label{eq:chance_constraints}
\text{s.t. }&\prob{H_{\mathrm{x},j}^\top x_k\leq 1}\geq p,~ j\in\mathbb{I}_{[1,r]},~ k\in\mathbb{I}_{[0,N]},
\end{align}
\end{subequations}
with $Q,R\succ 0$, $H_{\mathrm{x},j}\in\mathbb{R}^n$, $j\in\mathbb{I}_{[1,r]}$, and probability $p\in(0,1)$ for the chance constraints~\eqref{eq:chance_constraints}. 
We consider the \textit{stochastic} and \textit{open-loop} control problem to allow for a simple system identification using MLE (Sec.~\ref{sec:SysID}) and an easier comparison between the two model parametrizations, respectively. 
Generalizations of this problem setup (e.g., receding horizon, robust, input-output models) are discussed in Sections~\ref{sec:SysID_3}, \ref{sec:data_driven_4}, \ref{sec:data_driven_3}. 
In order to introduce the problem, we assume that the model parameters are known. 
The problem of system identification and predictive control with probabilistic bounds on the model parameters is treated in Sections~\ref{sec:SysID} and \ref{sec:data_driven}, respectively. 

\subsection{State space model} 
\label{sec:problem_2} 
Problem~\eqref{eq:stoch_opt} is commonly used in stochastic MPC and we derive a deterministic formulation following~\cite[Sec.~3.2.1]{farina2016stochastic}, \cite{hewing2020recursively}. 
For a given input sequence $u_{[0,N-1]}\in\mathbb{U}^N$ and initial state $x_0\in\mathcal{N}(\overline{x}_0,\Sigma_{\mathrm{x},0})$, the predicted states are Gaussian variables $x_k\sim\mathcal{N}(\overline{x}_k,\Sigma_{\mathrm{x},k})$, $k\in\mathbb{I}_{[1,N]}$. 
The mean and variance can be recursively computed as $\overline{x}_{k+1}=A\overline{x}_k+Bu_k$ and $\Sigma_{\mathrm{x},k+1}=A \Sigma_{\mathrm{x},k}A^\top+E\Sigma_{\mathrm{w}}E^\top$, $k\in\mathbb{I}_{[0,N-1]}$. 
The probabilistic constraint~\eqref{eq:chance_constraints} is equivalent to $H_{\mathrm{x},j}\overline{x}_k\leq 1-c_p\|H_{\mathrm{x},j}\|_{\Sigma_{\mathrm{x},k}}$, with $c_p:=\sqrt{\chi_1^2(2p-1)}$. Furthermore, we have $\expect{\|x_k\|_Q^2}=\|\overline{x}_k\|_Q^2+\tr{Q\Sigma_{\mathrm{x},k}}$. 
By pre-computing the variance $\Sigma_{\mathrm{x},k}$ and neglecting the constant terms in the cost, Problem~\eqref{eq:stoch_opt} is equivalent to the following QP:
\begin{align}
\label{eq:stoch_opt_state}
&\min_{u_{[0,N-1]}\in\mathbb{U}^N}~\sum_{k=0}^{N-1}\|\overline{x}_{k+1}\|_Q^2+\|u_k\|_R^2\\
\text{s.t. }&H_{\mathrm{x},j}^\top\overline{x}_k\leq 1-c_p\|H_{\mathrm{x},j}\|_{\Sigma_{\mathrm{x},k}} 
,~ j\in\mathbb{I}_{[1,r]},~k\in\mathbb{I}_{[0,N]},\nonumber\\
&\overline{x}_{k+1}=A\overline{x}_k+Bu_k,~k\in\mathbb{I}_{[0,N-1]}.\nonumber
\end{align}

\subsection{Multi-step predictors}
\label{sec:problem_3}
\textit{Multi-step predictors} are given by 
\begin{align}
\label{eq:model_convoluted}
x_{[1,N]}=G_{\mathrm{u}}u_{[0,N-1]}+G_{0}x_0+G_{\mathrm{w}}w_{[0,N-1]},
\end{align}
where the matrices $G_{\mathrm{u}},G_0,G_{\mathrm{w}}$ directly map  to the future state sequence, instead of sequentially applying the one-step prediction of the state space model~\eqref{eq:sys_state}. 
The rows in~\eqref{eq:model_convoluted} correspond to different horizon steps $k\in\mathbb{I}_{[1,N]}$ with 
\begin{align}
\label{eq:model_convoluted_split}
&x_k=G_{0,k}x_0+G_{\mathrm{u},k}u_{[0,k-1]}+G_{\mathrm{w},k}w_{[0,k-1]},~G_{0,k}=A^k,\nonumber\\ 
&G_{\mathrm{u},k}=
\begin{bmatrix}
A^{k-1}B,\dots,B
\end{bmatrix},~
G_{\mathrm{w},k}
=\begin{bmatrix}
A^{k-1}E,\dots, E
\end{bmatrix}.
\end{align}
This model corresponds to a compressed/condensed version of the recursive application of the state space model~\eqref{eq:sys_state}, as is often used for the numerical solution of the QP~\eqref{eq:stoch_opt_state} (cf.~\cite[Chap.~7, Sec.~8.8.4]{rawlings2017model}). 
This model structure is prevalent in MPC schemes based on FIR models~\cite{richalet1978model,cutler1980dynamic,zheng1993robust,tanaskovic2014adaptive} or subspace predictive control (SPC)~\cite{favoreel1999spc}, and is recently employed in many (robust) predictive control approaches~\cite{tanaskovic2014adaptive,heirung2017dual,soloperto2019dual,berberich2020data,coulson2021distributionally,huang2021robust,dorfler2021bridging, yin2021maximum,yin2021data,iannelli2021experiment,terzi2019learning,lauricella2020data,terzi2022robust,furieri2021near,chen2020robust,bujarbaruah2020robust,maddalena2021kpc,thorpe2022data} (cf. discussion Section~\ref{sec:data_driven_4}).  

Considering the model~\eqref{eq:model_convoluted_split} with initial state $x_0\sim\mathcal{N}(\overline{x}_0,\Sigma_{\mathrm{x},0})$, we obtain $x_k\sim\mathcal{N}(\overline{x}_k,\Sigma_{\mathrm{x},k})$ with 
$\overline{x}_k=G_{\mathrm{u},k}u_{[0,k-1]}+G_{0,k}\overline{x}_0$ and
$\Sigma_{\mathrm{x},k}=G_{0,k}\Sigma_{\mathrm{x},0}G_{0,k}^\top
+G_{\mathrm{w},k}\mathrm{diag}_k(\Sigma_{\mathrm{w}})G_{\mathrm{w},k}^\top$, $k\in\mathbb{I}_{[1,N]}$. 
Thus, we can reformulate Problem~\eqref{eq:stoch_opt} as the following QP: 
\begin{align}
\label{eq:stoch_opt_convoluted}
&\min_{u_{[0,N-1]}\in\mathbb{U}^N}~\sum_{k=0}^{N-1}\|\overline{x}_{k+1}\|_Q^2+\|u_k\|_R^2\\
\text{s.t. }&H_{\mathrm{x},j}^\top\overline{x}_k\leq 1-c_p\|H_{\mathrm{x},j}\|_{\Sigma_{\mathrm{x},k}},~ 
 j\in\mathbb{I}_{[1,r]},~k\in\mathbb{I}_{[0,N]},\nonumber\\
&\overline{x}_{k}=G_{0,k}\overline{x}_0+G_{\mathrm{u},k}u_{[0,k-1]},~k\in\mathbb{I}_{[1,N]}.\nonumber
\end{align}

\subsection{Discussion}
\label{sec:problem_4}
First, note that both optimization problems are exact solutions to the stochastic optimal control problem. 
\begin{lemma}
\label{lemma:stoch_eqiv}
The stochastic optimal control problem~\eqref{eq:stoch_opt}, and the two deterministic QPs~\eqref{eq:stoch_opt_state} and~\eqref{eq:stoch_opt_convoluted} result in the same minimizing input sequence $u_{[0,N-1]}$.
\end{lemma}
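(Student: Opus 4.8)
The plan is to prove the statement in two stages: first establish that \emph{each} deterministic QP is an \emph{exact} reformulation of the stochastic problem~\eqref{eq:stoch_opt} (rather than a conservative relaxation), and then show that the two QPs~\eqref{eq:stoch_opt_state} and~\eqref{eq:stoch_opt_convoluted} are literally the \emph{same} optimization problem. For the first stage I would use that, for any deterministic $u_{[0,N-1]}$, the predicted state is Gaussian, $x_k\sim\mathcal{N}(\overline{x}_k,\Sigma_{\mathrm{x},k})$. The objective then decomposes by linearity of expectation and the bias--variance identity $\expect{\|x_k\|_Q^2}=\|\overline{x}_k\|_Q^2+\tr{Q\Sigma_{\mathrm{x},k}}$; the crucial observation is that $\Sigma_{\mathrm{x},k}$ is \emph{independent of $u$} for additive-noise linear dynamics, so the trace terms are constants and dropping them leaves the minimizer unchanged. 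For the chance constraints~\eqref{eq:chance_constraints}, the scalar $H_{\mathrm{x},j}^\top x_k$ is Gaussian with mean $H_{\mathrm{x},j}^\top\overline{x}_k$ and standard deviation $\|H_{\mathrm{x},j}\|_{\Sigma_{\mathrm{x},k}}$, so $\prob{H_{\mathrm{x},j}^\top x_k\leq 1}\geq p$ is \emph{equivalent} to $H_{\mathrm{x},j}^\top\overline{x}_k\leq 1-c_p\|H_{\mathrm{x},j}\|_{\Sigma_{\mathrm{x},k}}$ with $c_p=\Phi^{-1}(p)$, where $\Phi$ is the standard normal CDF; I would confirm $\Phi^{-1}(p)=\sqrt{\chi_1^2(2p-1)}$ using $Z^2\sim\chi_1^2$ for $Z\sim\mathcal{N}(0,1)$, which gives $2\Phi(\sqrt{\chi_1^2(2p-1)})-1=2p-1$. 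Since the tightening again depends only on the input-independent variance, both QPs share, up to an additive constant, the objective and the feasible-set description of~\eqref{eq:stoch_opt}.

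The second stage reduces to verifying that the two QPs compute the \emph{same} mean $\overline{x}_k$ and variance $\Sigma_{\mathrm{x},k}$. I would unroll the state-space recursions of Section~\ref{sec:problem_2} by induction: $\overline{x}_{k+1}=A\overline{x}_k+Bu_k$ with initial $\overline{x}_0$ yields $\overline{x}_k=A^k\overline{x}_0+[A^{k-1}B,\dots,B]\,u_{[0,k-1]}$, which is precisely $G_{0,k}\overline{x}_0+G_{\mathrm{u},k}u_{[0,k-1]}$ by~\eqref{eq:model_convoluted_split}; likewise $\Sigma_{\mathrm{x},k+1}=A\Sigma_{\mathrm{x},k}A^\top+E\Sigma_{\mathrm{w}}E^\top$ unrolls to $A^k\Sigma_{\mathrm{x},0}(A^k)^\top+\sum_{i=0}^{k-1}A^iE\Sigma_{\mathrm{w}}E^\top(A^i)^\top$, which matches $G_{0,k}\Sigma_{\mathrm{x},0}G_{0,k}^\top+G_{\mathrm{w},k}\mathrm{diag}_k(\Sigma_{\mathrm{w}})G_{\mathrm{w},k}^\top$ exactly because $G_{0,k}$, $G_{\mathrm{u},k}$, $G_{\mathrm{w},k}$ are the condensed form of the recursion. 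Hence the objectives and constraints of~\eqref{eq:stoch_opt_state} and~\eqref{eq:stoch_opt_convoluted} coincide verbatim, so the two QPs are identical. As their common objective is strictly convex in $u$ (owing to $R\succ 0$) over the shared convex feasible set, the minimizer is unique, and so the singular phrase ``the same minimizing input sequence'' is well defined and shared by all three problems.

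The main obstacle is essentially bookkeeping rather than any deep argument: the substantive work was already carried out in the derivations of Sections~\ref{sec:problem_2} and~\ref{sec:problem_3}, and the lemma merely collects them. The one point warranting genuine care is confirming that \emph{every} transformation is an honest equivalence—in particular the Gaussian chance-constraint reformulation and the claim that the predicted variance (hence both the constraint tightening and the discarded cost terms) does not depend on the decision variable—so that no conservatism sneaks in and the three problems are truly interchangeable.
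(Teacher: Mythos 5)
Your proof is correct and follows essentially the same route as the paper, which leaves the lemma without an explicit proof and instead relies on exactly the derivations you spell out: the exact Gaussian reformulation of cost and chance constraints in Sections~\ref{sec:problem_2} and~\ref{sec:problem_3} (with the input-independence of $\Sigma_{\mathrm{x},k}$ justifying the dropped trace terms), plus the observation that~\eqref{eq:model_convoluted_split} is the condensed/unrolled form of the state space recursion, so the two QPs coincide. Your verification that $c_p=\sqrt{\chi_1^2(2p-1)}=\Phi^{-1}(p)$ and the uniqueness remark (which implicitly uses convexity of $\mathbb{U}$) are welcome additions of rigor, not deviations.
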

With the state space model, the mean and variance are sequentially computed by applying a one-step discrete-time model, while Equation~\eqref{eq:model_convoluted_split} directly yields the multi-step prediction.   
Problem~\eqref{eq:stoch_opt_state} has a sparse structure, which is computationally more efficient for long horizons $N$ (cf.~\cite[Sec.~3]{kouzoupis2018recent}).  
Given that all the matrices are available offline, we can also derive Problem~\eqref{eq:stoch_opt_convoluted} by condensing Problem~\eqref{eq:stoch_opt_state}, i.e., by eliminating the dynamics equations through substitution.

Beyond the stochastic control problem~\eqref{eq:stoch_opt}, the state space formulation also allows for an easy computation of control Lyapunov functions and invariant sets, which are typically employed in a receding horizon MPC implementation~\cite{rawlings2017model}. 
Thus, in case of a given system model, there is no clear drawback in using the state space model.  
This also explains why many modern MPC textbooks (cf., e.g.,~\cite{kouvaritakis2016model,rawlings2017model}) consider only state space models.  
However, we discuss later (Sec.~\ref{sec:data_driven}) that parametric uncertainties in state space models can significantly complicate the design, which is not necessarily the case for multi-step predictors. 

\section{System Identification}
\label{sec:SysID} 
In the following, we assume that the model parameters are unknown and need to be identified.
To this end, we apply some probing input signal $u_{[0,T-1]}$ over a time period $T\in\mathbb{I}_{\geq 1}$ to the system~\eqref{eq:sys_state} and obtain noisy state measurements $\tilde{x}_k=x_k+\epsilon_k$, $k\in\mathbb{I}_{[0,T]}$ with i.i.d. noise $\epsilon_k\sim\mathcal{N}(0,\Sigma_\epsilon)$. 
Generalizations to noisy output measurements are discussed in Section~\ref{sec:data_driven_3}). 
We assume that the matrices characterizing the variance of the disturbances, i.e., $\Sigma_{\mathrm{w}},\Sigma_\epsilon,E,G_{\mathrm{w}}$, are known. 
First, we focus on the identification of a multi-step predictor (Sec.~\ref{sec:SysID_1}). 
Then, we consider the special case of state space models (Sec.~\ref{sec:SysID_2}) and contrast the results (Sec.~\ref{sec:SysID_3}). 

\subsection{Multi-step predictors}
\label{sec:SysID_1}
As noted early~\cite{favoreel1999spc}, subspace identification computes the matrices $G_{\mathrm{u}}$, $G_0$ as intermediate quantities and hence there is no need to identify a state space model. 
The direct identification of the multi-step predictor~\eqref{eq:model_convoluted_split} can be written as a linear regression problem with the parameters $\theta_k:=\vec{[G_{0,k},G_{\mathrm{u},k}]}\in\mathbb{R}^{n^2+nkm}$. 
The following lemma provides an MLE to identify each predictor $\theta_k$ individually. 
\begin{lemma}
\label{lemma:id_correlated}
Suppose we have a uniform prior over the parameters $\theta_k$. 
Consider the noisy regressor $\tilde{\Phi}^k_j:=[\tilde{x}_j^\top,u_{[j,j+k-1]}^\top]\otimes I_n\in\mathbb{R}^{n\times nkm+n^2}$, the covariance matrix $\tilde{\Sigma}_{\mathrm{w,corr},k}$ according to~\eqref{eq:Sigma_corr_conv} below, and suppose that $\Sigma_{\theta,k}^{-1}:=\tilde{\Phi}^{k,\top}_{[0,T-k]}\tilde{\Sigma}_{\mathrm{w,corr},k}^{-1}\tilde{\Phi}^k_{[0,T-k]}\succ 0$.\footnote{%
In case $\tilde{\Sigma}_{\mathrm{w,corr},k}$ is singular, the identification problem should be projected on the corresponding subspace.} 
Then, for any $\delta\in(0,1)$, we have $\prob{\theta_k-\hat{\theta}_k\in\Theta_{\delta,k}}\geq \delta$ with
\begin{align}
\label{eq:ellipse}
\Theta_{\delta,k}=\{\theta|~\theta^\top \Sigma_{\theta,k}^{-1}\theta\leq \chi^2_{nkm+n^2}(\delta)\},
\end{align}
and the weighted least-squares estimate
\begin{align}
\label{eq:LS_corr}
\hat{\theta}_k:=&\arg\min_{\theta}\|\tilde{x}_{[k,T]}-\tilde{\Phi}^k_{[0,T-k]}\theta\|_{\tilde{\Sigma}_{\mathrm{w,corr},k}^{-1}}^2\\
=&\Sigma_{\theta,k}\tilde{\Phi}^{k,\top}_{[0,T-k]}\tilde{\Sigma}_{\mathrm{w,corr},k}^{-1}\tilde{x}_{[k,T]}.\nonumber
\end{align}
\end{lemma}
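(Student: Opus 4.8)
The plan is to recognize the identification of $\theta_k$ as a standard weighted least-squares problem with Gaussian residuals, after which the confidence ellipsoid follows from the distribution of the estimation error. First I would put the multi-step predictor into regression form. Substituting the noisy measurements $\tilde{x}_t = x_t + \epsilon_t$ into $x_{j+k} = G_{0,k}x_j + G_{\mathrm{u},k}u_{[j,j+k-1]} + G_{\mathrm{w},k}w_{[j,j+k-1]}$ gives, for each $j\in\mathbb{I}_{[0,T-k]}$,
\[
\tilde{x}_{j+k} = G_{0,k}\tilde{x}_j + G_{\mathrm{u},k}u_{[j,j+k-1]} + \eta_{j,k}, \quad \eta_{j,k} := G_{\mathrm{w},k}w_{[j,j+k-1]} - G_{0,k}\epsilon_j + \epsilon_{j+k}.
\]
Applying the identity $Mv = (v^\top\otimes I_n)\vec{M}$ with $M=[G_{0,k},G_{\mathrm{u},k}]$ and $v=[\tilde{x}_j^\top,u_{[j,j+k-1]}^\top]^\top$ identifies the regressor $\tilde{\Phi}^k_j$ and $\theta_k=\vec{M}$, so stacking over $j$ yields the linear model $\tilde{x}_{[k,T]} = \tilde{\Phi}^k_{[0,T-k]}\theta_k + \eta_{[0,T-k],k}$.

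Next I would establish that the stacked residual $\eta_{[0,T-k],k}$ is zero-mean Gaussian with covariance $\tilde{\Sigma}_{\mathrm{w,corr},k}$ as in~\eqref{eq:Sigma_corr_conv}, built from the known matrices $\Sigma_{\mathrm{w}},\Sigma_\epsilon,G_{\mathrm{w}}$; the designation ``corr'' reflects that, unlike the one-step case, the residuals at different $j$ are correlated through the overlapping disturbance windows $w_{[j,j+k-1]}$ and the shared measurement-noise terms. With this, the likelihood is $p(\tilde{x}_{[k,T]}\mid\theta_k)\propto\exp(-\tfrac12\|\tilde{x}_{[k,T]}-\tilde{\Phi}^k_{[0,T-k]}\theta_k\|^2_{\tilde{\Sigma}_{\mathrm{w,corr},k}^{-1}})$, so under the uniform prior the MAP estimate coincides with the MLE, which is exactly the weighted least-squares minimizer~\eqref{eq:LS_corr}; setting the gradient of the weighted residual to zero and using $\Sigma_{\theta,k}^{-1}\succ 0$ gives the closed form $\hat{\theta}_k = \Sigma_{\theta,k}\tilde{\Phi}^{k,\top}_{[0,T-k]}\tilde{\Sigma}_{\mathrm{w,corr},k}^{-1}\tilde{x}_{[k,T]}$.

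Then I would analyze the estimation error. Substituting the linear model into~\eqref{eq:LS_corr} gives $\hat{\theta}_k-\theta_k = \Sigma_{\theta,k}\tilde{\Phi}^{k,\top}_{[0,T-k]}\tilde{\Sigma}_{\mathrm{w,corr},k}^{-1}\eta_{[0,T-k],k}$, a linear map of the Gaussian $\eta$, hence zero-mean Gaussian; computing its covariance, the weighting $\tilde{\Sigma}_{\mathrm{w,corr},k}^{-1}\tilde{\Sigma}_{\mathrm{w,corr},k}\tilde{\Sigma}_{\mathrm{w,corr},k}^{-1}$ telescopes and, by the definition of $\Sigma_{\theta,k}^{-1}$, the covariance collapses to exactly $\Sigma_{\theta,k}$, so $\hat{\theta}_k-\theta_k\sim\mathcal{N}(0,\Sigma_{\theta,k})$. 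The claim then follows from the standard fact that for $z\sim\mathcal{N}(0,\Sigma)$ of dimension $d=nkm+n^2$ one has $z^\top\Sigma^{-1}z\sim\chi^2_d$: taking $z=\hat{\theta}_k-\theta_k$ yields $\prob{(\theta_k-\hat{\theta}_k)^\top\Sigma_{\theta,k}^{-1}(\theta_k-\hat{\theta}_k)\leq\chi^2_{nkm+n^2}(\delta)}=\delta$, i.e.\ $\prob{\theta_k-\hat{\theta}_k\in\Theta_{\delta,k}}\geq\delta$ (the sign is immaterial since $\Theta_{\delta,k}$ is symmetric).

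The step I expect to be the main obstacle is justifying the weighted least-squares form in the presence of the errors-in-variables structure: the regressor $\tilde{\Phi}^k_j$ itself contains the noisy state $\tilde{x}_j$, whose noise $\epsilon_j$ also enters the residual $\eta_{j,k}$ through the $-G_{0,k}\epsilon_j$ term, and the same measured state serves as a regressor in some equations and as an output target in others along the single trajectory. Making the clean Gaussian linear-regression argument exact therefore hinges on the construction of $\tilde{\Sigma}_{\mathrm{w,corr},k}$ absorbing these cross-correlations --- consistent with the standing assumption that the disturbance-characterizing matrices (including $G_{\mathrm{w}}$, hence the $G_{0,k}$ entering the noise) are known --- so that, after conditioning on the regressors, the residual is genuinely decorrelated from the information used by the estimator and the error covariance reduces cleanly to $\Sigma_{\theta,k}$. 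I would verify this covariance collapse carefully, since it is the crux that turns a nominally errors-in-variables problem into the clean chi-squared confidence ellipsoid.
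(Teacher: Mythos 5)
Your setup matches the paper's proof step for step up to and including the identification of the weighted least-squares estimate as the MLE: the regression form $\tilde{x}_{j+k}=\tilde{\Phi}^k_j\theta_k+\tilde{w}^k_j$ with residual $\tilde{w}^k_j=G_{\mathrm{w},k}w_{[j,j+k-1]}-G_{0,k}\epsilon_j+\epsilon_{j+k}$, the stacked Gaussian residual with band-diagonal covariance $\tilde{\Sigma}_{\mathrm{w,corr},k}$, and the Gaussian likelihood are exactly the paper's argument. Where you diverge is the final step, and that is where there is a genuine gap. You derive the confidence set from the \emph{sampling distribution} of the estimator, writing $\hat{\theta}_k-\theta_k=\Sigma_{\theta,k}\tilde{\Phi}^{k,\top}_{[0,T-k]}\tilde{\Sigma}_{\mathrm{w,corr},k}^{-1}\tilde{w}^k_{[0,T-k]}$ and calling this ``a linear map of the Gaussian'' residual. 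It is not: $\tilde{\Phi}^k_{[0,T-k]}$ (and hence $\Sigma_{\theta,k}$ itself) is random and correlated with the residual --- through the measurement noise $\epsilon_j$ that enters both the regressor and the $-G_{0,k}\epsilon_j$ term, and through the process noise $w$ that drives the states appearing as regressors in later windows of the same trajectory. Conditional on the regressors the residual is no longer $\mathcal{N}(0,\tilde{\Sigma}_{\mathrm{w,corr},k})$, and unconditionally the error is a product of dependent random quantities, so the exact $\mathcal{N}(0,\Sigma_{\theta,k})$ distribution and the exact $\chi^2_{nkm+n^2}$ pivot do not follow. You correctly flag this as ``the crux'' and say you would ``verify the covariance collapse carefully,'' but that verification cannot succeed in the frequentist framing --- this is the well-known finite-sample non-Gaussianity of least squares on a single closed trajectory.

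The paper closes the argument differently, and this is why the uniform-prior hypothesis appears in the lemma statement (your proof never actually uses it for anything essential). The paper completes the square in $\theta$ inside the likelihood, obtaining $\exp\bigl[-\tfrac{1}{2}\|\theta_k-\hat{\theta}_k\|^2_{\Sigma_{\theta,k}^{-1}}\bigr]$ up to a constant, and then invokes the uniform prior so that the \emph{posterior} of $\theta_k$ given the data is $\mathcal{N}(\hat{\theta}_k,\Sigma_{\theta,k})$; the set $\Theta_{\delta,k}$ in~\eqref{eq:ellipse} is then the standard Gaussian credible ellipsoid. In that Bayesian reading the regressors are fixed (they are data one conditions on) and the probability statement is over the posterior of $\theta_k$, not over the noise realizations, which is what makes the Gaussian-in-$\theta$ claim exact. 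To repair your proof you would either have to adopt this posterior interpretation, or replace the exact chi-squared pivot with an asymptotic or self-normalized concentration argument --- neither of which is what you wrote.
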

\begin{proof}
This result is inspired by the MLE in~\cite[Lemma 3.1]{umenberger2019robust} and classical techniques to account for correlated data~\cite[Sec.~5]{aastrom1971system}. 
Following the steps in~\cite[A.1.1]{umenberger2019robust}, the dynamics~\eqref{eq:model_convoluted_split} satisfy
$x_{j+k}=\Phi^k_j\theta_k+G_{\mathrm{w},k}w_{[j,j+k-1]}$
with the regressor $\Phi^k_j=[x_j^\top,u_{[j,j+k-1]}^\top]\otimes I_n$. 
Given the noisy state $\tilde{x}_{k+j}=x_{k+j}+\epsilon_{k+j}$ and regressor $\tilde{\Phi}^k_j=\Phi^k_j+[\epsilon_j^\top,0_{1\times km}]\otimes I_n$, we have
$\tilde{x}_{k+j}=\tilde{\Phi}^k_j\theta_k+\tilde{w}^k_j$, 
with the residual $\tilde{w}^k_j=G_{\mathrm{w},k}w_{[j,j+k-1]}-G_{0,k}\epsilon_j+\epsilon_{k+j}$.
By stacking these regression equations we get
$\tilde{x}_{[k,T]}=\tilde{\Phi}^k_{[0,T-k]}\theta_k+\tilde{w}_{[0,T-k]}$. 
The residual $\tilde{w}_{[0,T-k]}$ is a linear combination of independent Gaussian variables $w_k$, $\epsilon_k$ and hence also follows a Gaussian distribution, i.e., $\tilde{w}_{[0,T-k]}\sim\mathcal{N}(0,\tilde{\Sigma}_{\mathrm{w,corr},k})$, with covariance $\tilde{\Sigma}_{\mathrm{w,corr},k}=\expect{\tilde{w}^k_{[0,T-k]}\tilde{w}_{[0,T-k]}^{k,\top}}$. 
The matrix $\Sigma_{\mathrm{w,corr},k}$ is a band-diagonal matrix with the elements
\begin{align}
\label{eq:Sigma_corr_conv}
\expect{\tilde{w}^k_j\tilde{w}^{k,\top}_j}=&G_{\mathrm{w},k}\mathrm{diag}_k(\Sigma_{\mathrm{w}})G_{\mathrm{w},k}^\top+G_{0,k}\Sigma_{\epsilon}G_{0,k}^\top+\Sigma_\epsilon,\nonumber\\
\expect{\tilde{w}^k_j\tilde{w}^{k,\top}_{i+j}}=&
G_{\mathrm{w},k}\begin{bmatrix}
0_{i\cdot q\times (k-i)q}&0_{i\cdot q\times i\cdot q}\\
\mathrm{diag}_{k-i}(\Sigma_{\mathrm{w}})&0_{(k-i)q\times iq} 
\end{bmatrix}G_{\mathrm{w},k}^\top,\nonumber\\
&i\in\mathbb{I}_{[1,k-1]},\nonumber\\
\expect{\tilde{w}^k_j\tilde{w}^{k,\top}_{j+k}}=&- \Sigma_\epsilon G_{0,k}^\top,~
\expect{\tilde{w}^k_j\tilde{w}^{k,\top}_{j+i}}=0_q,~ i>k.
\end{align} 
The likelihood is given by
\begin{align*}
&\prob{\tilde{w}^k_{[0,T-k]}=\tilde{x}_{[k,T]}-\tilde{\Phi}^k_{[k,T-k]}\theta_k}\\
\propto& \exp\left[-\dfrac{1}{2}\|\tilde{x}_{[k,T]}-\tilde{\Phi}^k_{[0,T-k]}\theta_k\|_{\tilde{\Sigma}_{\mathrm{w,corr},k}^{-1}}^2\right]\\
\stackrel{\eqref{eq:LS_corr}}{\propto}&\exp\left[-\dfrac{1}{2}\|\theta_k-\hat{\theta}_k\|_{\tilde{\Sigma}_{\theta,k}^{-1}}^2\right],
\end{align*}
and hence the weighted least-squares estimate~\eqref{eq:LS_corr} yields the MLE.
The uniform prior over $\theta_k$ ensures that the posterior distribution is proportional to the likelihood (cf.~\cite[Equ.~(18)]{umenberger2019robust}) and hence
 $\theta_k\sim\mathcal{N}(\hat{\theta}_k,\Sigma_{\theta,k})$. 
The set~\eqref{eq:ellipse} follows from the multivariate Gaussian distribution. 
\end{proof}
This result provides a simple identification method in terms of a weighted least-squares estimate and provides a high-probability bound on the parameter error {$\tilde{\theta}_k:=\theta_k-\hat{\theta}_k$}. 
However, to directly apply the result we need to know the covariance matrix $\tilde{\Sigma}_{\mathrm{w,corr},k}$, which explicitly depends on $G_{0,k}$, i.e., the impulse response to be identified. 
This is comparable to the \textit{generalized least-squares} method, where the correlation information is used to filter the data prior to applying the least-squares estimate (cf.~\cite[Sec.~5]{aastrom1971system}).
As this information is typically not known (exactly) a-priori, the corresponding parameters can be jointly optimized in the MLE, 
resulting in a highly nonlinear optimization problem, which can be solved using the \textit{expectation-maximisation} technique~\cite{gibson2005robust}. 
Note that neglecting this correlation results in a bias in the least-squares estimate, compare also the overview on linear system identification~\cite[Sec.~5]{aastrom1971system}. 

For long predictions $k\gg 1$, this result simplifies if we have only measurement noise $\epsilon_k$ and no disturbances $w_k$. 
In this case, the covariance $\Sigma_{\mathrm{w,corr}}$ has only two off-diagonal terms corresponding to $-\Sigma_{\epsilon}G_{0,k}$. 
For open-loop stable systems, we have $\lim_{k\rightarrow\infty}\|G_{0,k}\|=0$ or in case of FIR models we directly have $G_0=0$. 
Hence, in case of open-loop systems without additive disturbances, the bias of a simple least-squares estimation tends to zero for $k\rightarrow\infty$.

\subsection{State space model}
\label{sec:SysID_2}
Note that $\theta_1=\vec{[G_{0,1},G_{\mathrm{u},1}]}=\vec{[A,B]}\in\mathbb{R}^{n(n+m)}$, i.e., the MLE in Lemma~\ref{lemma:id_correlated} also covers the state space problem as a special case. 
Specifically, in this case the structure of the corresponding covariance matrix simplifies with $\tilde{\Sigma}_{\mathrm{w}}=E\Sigma_{\mathrm{w}}E^\top+\Sigma_\epsilon+A\Sigma_\epsilon A^\top$ and 
\begin{align}
\label{eq:Sigma_corr_state}
\tilde{\Sigma}_{\mathrm{w,corr},1}:=
\begin{bmatrix}
\tilde{\Sigma}_{\mathrm{w}}&-\Sigma_{\epsilon}A^\top&0_{n\times n(T-2)}\\ 
-A\Sigma_{\epsilon}&\tilde{\Sigma}_{\mathrm{w}}&\ddots\\
0_{n(T-2)\times n}&\ddots&\ddots\\
\end{bmatrix}, 
\end{align}
The following corollary shows that we can use a simple least-squares estimate for state space models if we assume noise-free state measurements.
\begin{corollary}
\label{cor:id_state}
Suppose we have a uniform prior over the parameters $\theta_1$ and noise-free measurements ($\Sigma_{\epsilon}=0$). 
Consider the regressor $\Phi_k^1=[x_k^\top,u_k^\top]\otimes I_n\in\mathbb{R}^{n(n+m)}$ and suppose $\Sigma_{\theta,1}^{-1}=\sum_{k=0}^{T-1}\Phi_k^{1,\top} \tilde{\Sigma}_{\mathrm{w}}^{-1}\Phi^1_k\succ 0$ with $\tilde{\Sigma}_{\mathrm{w}}:=E\Sigma_{\mathrm{w}}E^\top$. 
Then, for any $\delta\in(0,1)$, we have $\prob{\theta_1-\hat{\theta}_1\in\Theta_{\delta,1}}\geq \delta$ with $\Theta_{\delta,1}$ according to~\eqref{eq:ellipse} and the least-squares estimate
\begin{align}
\label{eq:LS_noiseFree}
\hat{\theta}_1=\arg\min_{\theta}\sum_{k=0}^{T-1}\|{x}_{k+1}-\Phi^1_k\theta\|_{\tilde{\Sigma}_{\mathrm{w}}^{-1}}^2.
\end{align}
\end{corollary}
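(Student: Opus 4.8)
The plan is to derive this purely as a specialization of Lemma~\ref{lemma:id_correlated} to the horizon $k=1$ together with the assumption $\Sigma_\epsilon=0$. Since $\theta_1=\vec{[G_{0,1},G_{\mathrm{u},1}]}=\vec{[A,B]}$, the state space identification is literally the first multi-step predictor, so no separate derivation is needed once the covariance structure simplifies.

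First I would set $k=1$ and $\Sigma_\epsilon=0$ throughout the proof of Lemma~\ref{lemma:id_correlated}. With noise-free measurements we have $\tilde{x}_j=x_j$ and $\tilde{\Phi}^1_j=\Phi^1_j$, since the regressor-noise term $[\epsilon_j^\top,0_{1\times m}]\otimes I_n$ vanishes; thus all tilde quantities reduce to their noise-free counterparts, and the same uniform-prior argument applies. I would then evaluate the band-diagonal covariance $\tilde{\Sigma}_{\mathrm{w,corr},1}$ from~\eqref{eq:Sigma_corr_conv} at $k=1$. The diagonal block becomes $G_{\mathrm{w},1}\,\mathrm{diag}_1(\Sigma_{\mathrm{w}})\,G_{\mathrm{w},1}^\top+G_{0,1}\Sigma_\epsilon G_{0,1}^\top+\Sigma_\epsilon=E\Sigma_{\mathrm{w}}E^\top=\tilde{\Sigma}_{\mathrm{w}}$; the one-step off-diagonal term $-\Sigma_\epsilon G_{0,1}^\top=-\Sigma_\epsilon A^\top$ vanishes; and the intermediate terms indexed by $i\in\mathbb{I}_{[1,k-1]}$ do not appear because this index set is empty for $k=1$. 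Hence $\tilde{\Sigma}_{\mathrm{w,corr},1}=\mathrm{diag}_T(\tilde{\Sigma}_{\mathrm{w}})$ is block-diagonal, i.e., the residuals are uncorrelated across time, in agreement with the $k=1$ template~\eqref{eq:Sigma_corr_state}.

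Next I would exploit this block-diagonal structure. Because the weighting matrix decouples over the $T$ stacked regression equations, the weighted least-squares objective~\eqref{eq:LS_corr} splits as $\|x_{[1,T]}-\Phi^1_{[0,T-1]}\theta\|_{\tilde{\Sigma}_{\mathrm{w,corr},1}^{-1}}^2=\sum_{k=0}^{T-1}\|x_{k+1}-\Phi^1_k\theta\|_{\tilde{\Sigma}_{\mathrm{w}}^{-1}}^2$, which is exactly~\eqref{eq:LS_noiseFree}; likewise the information matrix reduces to $\tilde{\Phi}^{1,\top}_{[0,T-1]}\tilde{\Sigma}_{\mathrm{w,corr},1}^{-1}\tilde{\Phi}^1_{[0,T-1]}=\sum_{k=0}^{T-1}\Phi^{1,\top}_k\tilde{\Sigma}_{\mathrm{w}}^{-1}\Phi^1_k=\Sigma_{\theta,1}^{-1}$. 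Under the assumed positive definiteness of $\Sigma_{\theta,1}^{-1}$, the probabilistic bound $\prob{\theta_1-\hat{\theta}_1\in\Theta_{\delta,1}}\geq\delta$ with $\Theta_{\delta,1}$ as in~\eqref{eq:ellipse} then follows immediately from Lemma~\ref{lemma:id_correlated}.

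The only step that needs genuine care is the collapse of the band-diagonal covariance to a block-diagonal one: the entire content of the corollary is that, for $\Sigma_\epsilon=0$ at $k=1$, the measurement-noise-induced correlation $-\Sigma_\epsilon A^\top$ disappears, so that the simple per-step (time-uncorrelated) least-squares estimate coincides with the MLE and no generalized-least-squares filtering of the data is required. I would therefore state explicitly why no correlation survives, namely that the sole source of temporal correlation in $\tilde{w}^1_j=E w_j-A\epsilon_j+\epsilon_{j+1}$ is the shared measurement noise $\epsilon_j$, which is switched off by $\Sigma_\epsilon=0$.
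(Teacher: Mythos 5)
Your proposal is correct and follows exactly the paper's own route: specialize Lemma~\ref{lemma:id_correlated} to $k=1$ with $\Sigma_\epsilon=0$, observe that $\tilde{\Sigma}_{\mathrm{w,corr},1}$ collapses to a block-diagonal matrix with blocks $\tilde{\Sigma}_{\mathrm{w}}=E\Sigma_{\mathrm{w}}E^\top$, and hence rewrite the weighted least-squares objective and the information matrix as sums over time. The paper's proof is just a terser version of the same argument; your added detail on why the off-diagonal correlation $-\Sigma_\epsilon A^\top$ vanishes is exactly the point the paper leaves implicit.
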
 
\begin{proof}
For $\Sigma_{\epsilon}=0$, the matrix $\tilde{\Sigma}_{\mathrm{w,corr},1}$ is block-diagonal with elements $\tilde{\Sigma}_{\mathrm{w}}$. 
Hence, the least-squares estimate and the covariance matrix can be equivalently written as a sum.
\end{proof}
This result essentially recovers the MLE bounds from~\cite[Prop.~2.1]{umenberger2019robust}. 
The ellipsoidal bound on the parameters $\theta$ also implies an ellipsoidal bound on the system matrices (cf.~\cite[Lemma.~3.1]{umenberger2019robust}), which may be easier to use for robust control.

\subsection{Discussion}
\label{sec:SysID_3}
We formulated the identification problem as an MLE with a weighted least-squares estimate, resulting in a (high-probability) bound on the parameter error.
However, the presented bounds and estimates can only be applied if the correlation in the disturbances is known. 
In general, a joint identification of the parameters and this covariance matrix is needed, which results in a nonlinear problem (cf.~{\cite[Sec.~5]{aastrom1971system}), which can be solved using \textit{expectation-maximisation}~\cite{gibson2005robust}.

\subsubsection*{Scalability, conditioning, prior}
A crucial difference between the two parametrizations is the number of required parameters. 
In particular, for the state space model we have $\theta\in\mathbb{R}^{n^2+nm}$ and the parameters often directly relate to physical quantities (e.g., damping constant). 
This allows for a meaningful integration of priors on the parameters, which can reduce the sample complexity. 

On the other hand, 
directly identifying a multi-step predictor requires $\theta_k\in\mathbb{R}^{nmk+n^2}$, i.e., the number of parameters scales linearly with the horizon $k$. 
This issue can be relaxed by considering truncated FIR models (cf.~\cite{lundstrom1995limitations}, \cite[App.~A]{tanaskovic2014adaptive}) or structured parametrizations using OBF~\cite{heuberger1995generalized}. 
Multi-step predictors allow to naturally impose general open-loop stability priors, e.g., in terms of over-shoot constants and decay rates (without specifying a Lyapunov function).

Hence, in case of unstable/marginally-stable systems or physical priors on model parameters, identifying a state space model has benefits. 
On the other hand, the direct identification of multi-step predictors allows to encode priors in terms of open-loop stability. 
For long horizons, multi-step predictors can provide better accuracy (cf.~\cite{shook1991identification}, \cite[Thm.~1]{terzi2022robust}) at the cost of an increased complexity.  

\subsubsection*{Impact of noise or disturbances on identification} 
The state space identification problem becomes simpler if we assume noise-free state measurements (cf. Cor.~\ref{cor:id_state}). 
On the other hand, the identification of multi-step predictors (Lemma~\ref{lemma:id_correlated}) for long horizons ($k\gg 1$) simplifies if only measurement noise is present.  
For comparison, early MPC approaches based on impulse responses, e.g. dynamic matrix control~\cite{cutler1980dynamic}, are restricted to "disturbances on the output"~\cite{lundstrom1995limitations}. 
OBFs~\cite{heuberger1995generalized}, a structured parametrization of~\eqref{eq:model_convoluted}, also allow for a simple identification under measurement noise~\cite{heirung2017dual,soloperto2019dual}. 
The identification of multi-step predictors is also related to the (implicit) data-driven models based on Hankel matrices~\cite{coulson2021distributionally,berberich2020data,dorfler2021bridging,huang2021robust,yin2021maximum,yin2021data,iannelli2021experiment}, which typically also consider noisy output measurements without disturbances.  
A corresponding MLE with an ellipsoidal confidence bound was recently derived in~\cite{yin2021maximum,yin2021data}, which is also applicable to unbiased impulse response estimation~\cite[Sec.~6]{yin2021maximum}. 
In particular, this approach also considers only output measurement noise without disturbances, the MLE is nonlinear including non-diagonal weights to account for cross correlation, which require prior knowledge of $G_{0,k}$. 
In~\cite[Sec.~3.4]{furieri2021near}, a multi-step predictor~\eqref{eq:model_convoluted} subject to both noisy measurements and disturbances is identified using simple a least-squares estimate or the MLE~\cite{yin2021maximum}, however, without deriving corresponding error bounds.

We note that by using non-overlapping data-segments (structured in a page matrix), the cross correlation in Lemma~\ref{lemma:id_correlated} can be removed. 
For the special case of state space models, this corresponds to skipping every second measurement. 
In both parametrizations, the resulting uncorrelated residuals allow for a simple least-squares estimate.

\subsubsection*{Robust identification methods}
The challenge inherent in MLE with correlated disturbances can also be avoided by considering \textit{robust} identification methods. 
In particular, given a compact bound on the disturbances, set-membership estimation can be used to identify the \textit{non-falsified} set that contains the true parameters, which is frequently used in robust adaptive MPC formulations~\cite{tanaskovic2014adaptive,terzi2019learning,terzi2022robust,lorenzen2019robust,lu2021robust,Koehler2019Adaptive}. 
In particular, the approaches in~\cite[Sec.~5.4]{lu2021robust} and \cite{terzi2019learning} are applicable to both noisy measurements and disturbances; \cite[Cor.~3]{lu2021robust} provides convergence results for fixed complexity estimates; and \textit{multi-step predictors}~\eqref{eq:model_convoluted_split} are explicitly considered in~\cite{terzi2019learning,terzi2022robust}.   
Note that most of the discussion in this paper regarding predictive control based on multi-step predictors equally applies to such a robust setup.

\subsubsection*{Input experiment design}
The derived estimation bounds are such that collecting more (informative) data reduces the parametric uncertainty. 
A related problem is choosing the probing input signal $u_{[0,T-1]}$, which is studied in dual control or optimal experiment design. 
Given, e.g., Corollary~\ref{cor:id_state}, a natural approach is to ensure a lower bound on the persistence of excitation, i.e., $\sum_{k=0}^{T-1} \Phi_k^\top \tilde{\Sigma}_{\mathrm{w}}^{-1}\Phi_k\succeq \beta I$, given a rough model description. 
This issue is, e.g., addressed in~\cite{umenberger2019robust,lu2021robust}, which introduce conservatism due to convex relaxations, and only obtain approximations~\cite{umenberger2019robust} or require more complex robust propagation methods~\cite[Sec~3.4]{lu2021robust}. 
On the other hand, if the problem is addressed for multi-step predictors~\eqref{eq:model_convoluted} with only measurement noise, the variance can be deterministically predicted, which is, e.g., exploited in~\cite{heirung2017dual,soloperto2019dual,iannelli2021experiment}.

\section{Data-driven predictive control with parametric uncertainty}  
\label{sec:data_driven}
In the following, we revisit the stochastic optimal control problem (Sec.~\ref{sec:problem}) given the uncertain parameter estimates (Sec.~\ref{sec:SysID}). 
To this end, we use the concept of \textit{robustness in probability}, i.e., we consider $\tilde{\theta}_k\in\Theta_{\delta,k}$, $\delta>p$ (Lemma~\ref{lemma:id_correlated}) robustly and enforce the chance constraints~\eqref{eq:chance_constraints} with a larger probability $\tilde{p}=p/\delta\in(p,1)$ (cf. \cite[Lemma~V.3]{wabersich2021probabilistic}). 
We first focus on the state space model (Sec.~\ref{sec:data_driven_1}) and then the multi-step predictors (Sec.~\ref{sec:data_driven_2}). 
Finally, we discuss related work (Sec.~\ref{sec:data_driven_4}) and contrast the results (Sec.~\ref{sec:data_driven_3}). 

\subsection{State space model} 
\label{sec:data_driven_1}
The predicted states are Gaussian ${x}_k\sim\mathcal{N}(\overline{x}_k,\Sigma_{\mathrm{x},k})$ (Sec.~\ref{sec:problem}), however, $\overline{x}_k$ and $\Sigma_{\mathrm{x},k}$ depend on the uncertain parameters $\theta_1=\vec{[A,B]}$. 
Conceptually, we can solve Problem~\eqref{eq:stoch_opt} using the following problem 
\begin{align}
\label{eq:stoch_opt_robust}
&\min_{u_{[0,N-1]}\in\mathbb{U}^N}\max_{\tilde{\theta}_1\in\Theta_{\delta,1}}\sum_{k=0}^{N-1}\|\overline{x}_{k+1}\|_Q^2+\|u_k\|_R^2+\tr{Q\Sigma_{\mathrm{x},k+1}}\nonumber\\
\text{s.t. }&H_{\mathrm{x},j}^\top\overline{x}_k\leq 1-c_{\tilde{p}}\|H_{\mathrm{x},j}\|_{\Sigma_{\mathrm{x},k}},
 j\in\mathbb{I}_{[1,r]},k\in\mathbb{I}_{[0,N]},\\
&\Sigma_{\mathrm{x},k+1}=A \Sigma_{\mathrm{x},k}A^\top+E\Sigma_{\mathrm{w}}E^\top,~ k\in\mathbb{I}_{[0,N-1]},~\tilde{\theta}_1\in\Theta_{1,\delta},\nonumber\\
&\overline{x}_{k+1}=A\overline{x}_k+Bu_k,~k\in\mathbb{I}_{[0,N-1]},~\tilde{\theta}_1\in\Theta_{1,\delta},\nonumber
\end{align}
with $c_{\tilde{p}}:=\sqrt{\chi_1^2(2\tilde{p}-1)}$. 
This deterministic $\min-\max$ problem is a non-conservative solution to Problem~\eqref{eq:stoch_opt}, given $\prob{\tilde{\theta}_1\in\Theta_{1,\delta}}\geq \delta$. 
However, $\Sigma_{\mathrm{x},k}$ and $\overline{x}_k$ are highly nonlinear in $\tilde{\theta}_1$ and the problem is in general intractable.

We briefly mention standard approaches from the MPC literature to compute a feasible (but suboptimal) solution to~\eqref{eq:stoch_opt_robust}. 
By ignoring the time-invariance of $\theta$, an upper bound for $\Sigma_{\mathrm{x},k}$ can be computed offline(!) using semi-definite programs (SDP), compare, e.g., \cite[Lemma~4.1, Thm.~4.2]{umenberger2019robust}, \cite{primbs2009stochastic}. 
Determining the reachable set for the mean $\overline{x}_k$ is addressed in tube-based MPC formulations for multiplicative/parametric uncertainty~\cite{primbs2009stochastic,fleming2014robust,lorenzen2019robust,Koehler2019Adaptive,lu2021robust}, \cite[Sec.~5.5]{kouvaritakis2016model}.
These approaches use a fixed parametrization (polytopes/ellipses) and provide sufficient conditions for an over-approximation of the reachable set. 
The size of this "tube/funnel" depends on the nominal input $u_{[0,N-1]}$ and is computed by adding linear/conic constraints over the prediction horizon.

\subsection{Multi-step predictors}
\label{sec:data_driven_2}
The following lemma shows how to enforce the chance constraints~\eqref{eq:chance_constraints} despite the parametric uncertainty. 
\begin{lemma}
\label{lemma:convoluted_robust}
Suppose the conditions in Lemma~\ref{lemma:id_correlated} hold and the input sequence $u_{[0,k-1]}$ satisfies
\begin{align}
\label{eq:convoluted_tightening}
&H_{\mathrm{x},j}^\top(\hat{G}_{0,k}\overline{x}_0+\hat{G}_{\mathrm{u},k}u_{[0,k-1]})+c_{\tilde{p}}\overline{h}_{j,k}\\ 
&+\sqrt{\chi^2_{nkm+n^2}(\delta)}\left\|\Sigma_{\theta,k}^{1/2}\begin{bmatrix}
\overline{x}_0\\u_{[0,k-1]}
\end{bmatrix}\otimes H_{\mathrm{x},j}\right\|\leq 1,~j\in\mathbb{I}_{[1,r]},\nonumber
\end{align}
with $\overline{h}_{j,k}$ according to~\eqref{eq:convoluted_tightening_constant} below. 
Then, the chance constraints~\eqref{eq:chance_constraints} hold. 
\end{lemma}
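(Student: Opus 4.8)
The plan is to exploit the concept of \emph{robustness in probability} so that the parametric error $\tilde\theta_k$ is handled robustly over the confidence ellipsoid $\Theta_{\delta,k}$, while the remaining stochasticity (from $x_0$ and $w_{[0,k-1]}$) is treated through the standard Gaussian chance-constraint reformulation. Concretely, I would first reduce the claim to showing that, for \emph{every} fixed realisation $\tilde\theta_k\in\Theta_{\delta,k}$, one has $\prob{H_{\mathrm{x},j}^\top x_k\leq 1\mid\tilde\theta_k}\geq\tilde{p}$ with $\tilde{p}=p/\delta$. Indeed, conditioning on the (independent) identification event $\{\tilde\theta_k\in\Theta_{\delta,k}\}$, which by Lemma~\ref{lemma:id_correlated} has probability at least $\delta$, and integrating this conditional lower bound yields $\prob{H_{\mathrm{x},j}^\top x_k\leq 1}\geq\tilde{p}\cdot\delta=p$ (cf.~\cite[Lemma~V.3]{wabersich2021probabilistic}). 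This turns the chance constraint into a deterministic tightening that must hold \emph{uniformly} over $\Theta_{\delta,k}$.

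For a fixed $\tilde\theta_k$, the model~\eqref{eq:model_convoluted_split} renders $x_k$ Gaussian with mean $\overline{x}_k(\tilde\theta_k)=G_{0,k}\overline{x}_0+G_{\mathrm{u},k}u_{[0,k-1]}$ and variance $\Sigma_{\mathrm{x},k}(\tilde\theta_k)$, so the conditional chance constraint is equivalent to the scalar-Gaussian tightening $H_{\mathrm{x},j}^\top\overline{x}_k(\tilde\theta_k)\leq 1-c_{\tilde{p}}\|H_{\mathrm{x},j}\|_{\Sigma_{\mathrm{x},k}(\tilde\theta_k)}$ already used in Sec.~\ref{sec:problem_2}. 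The key algebraic step is to make the dependence on $\tilde\theta_k$ affine: writing $[G_{0,k},G_{\mathrm{u},k}]=[\hat{G}_{0,k},\hat{G}_{\mathrm{u},k}]+[\tilde{G}_{0,k},\tilde{G}_{\mathrm{u},k}]$ and applying the identity $a^\top X b=(b\otimes a)^\top\vec{X}$ with $X=[\tilde{G}_{0,k},\tilde{G}_{\mathrm{u},k}]$, so that $\vec{X}=\tilde\theta_k$, the error contribution to the mean becomes linear in $\tilde\theta_k$:
\begin{align*}
H_{\mathrm{x},j}^\top\overline{x}_k(\tilde\theta_k)=H_{\mathrm{x},j}^\top(\hat{G}_{0,k}\overline{x}_0+\hat{G}_{\mathrm{u},k}u_{[0,k-1]})+\left(\begin{bmatrix}\overline{x}_0\\u_{[0,k-1]}\end{bmatrix}\otimes H_{\mathrm{x},j}\right)^{\!\top}\tilde\theta_k.
\end{align*}

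I would then bound the two $\tilde\theta_k$-dependent quantities separately over the ellipsoid. The affine mean term is a linear functional maximised over $\Theta_{\delta,k}=\{\theta\mid\theta^\top\Sigma_{\theta,k}^{-1}\theta\leq\chi^2_{nkm+n^2}(\delta)\}$; substituting $\eta=\Sigma_{\theta,k}^{-1/2}\tilde\theta_k$ and invoking Cauchy--Schwarz gives the \emph{exact} worst case $\sqrt{\chi^2_{nkm+n^2}(\delta)}\,\|\Sigma_{\theta,k}^{1/2}([\overline{x}_0^\top,u_{[0,k-1]}^\top]^\top\otimes H_{\mathrm{x},j})\|$, which is precisely the square-root term in~\eqref{eq:convoluted_tightening}. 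The standard-deviation term $c_{\tilde{p}}\|H_{\mathrm{x},j}\|_{\Sigma_{\mathrm{x},k}(\tilde\theta_k)}$ I would over-bound by $c_{\tilde{p}}\overline{h}_{j,k}$, where $\overline{h}_{j,k}$ is the worst-case value of $\|H_{\mathrm{x},j}\|_{\Sigma_{\mathrm{x},k}(\tilde\theta_k)}$ over $\Theta_{\delta,k}$ (the quantity to be fixed in~\eqref{eq:convoluted_tightening_constant}). Summing the estimate term, the maximised affine term, and $c_{\tilde{p}}\overline{h}_{j,k}$ and requiring this to be $\leq 1$ is exactly~\eqref{eq:convoluted_tightening}, and it upper-bounds the uniform deterministic tightening for all $\tilde\theta_k\in\Theta_{\delta,k}$, closing the argument.

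The main obstacle is the variance term: unlike the mean, $\Sigma_{\mathrm{x},k}(\tilde\theta_k)=G_{0,k}\Sigma_{\mathrm{x},0}G_{0,k}^\top+G_{\mathrm{w},k}\mathrm{diag}_k(\Sigma_{\mathrm{w}})G_{\mathrm{w},k}^\top$ is \emph{quadratic} in $\tilde\theta_k$ through $G_{0,k}$, so the worst case of the full sum $H_{\mathrm{x},j}^\top\overline{x}_k+c_{\tilde{p}}\|H_{\mathrm{x},j}\|_{\Sigma_{\mathrm{x},k}}$ over the ellipsoid has no closed form. Bounding the mean and standard-deviation contributions separately is exactly what makes the tightening only \emph{partially} tight, and I expect it to become exact precisely when $\Sigma_{\mathrm{x},0}=0$, since then $\Sigma_{\mathrm{x},k}$ depends only on the known $G_{\mathrm{w},k}$ and $\overline{h}_{j,k}=\|H_{\mathrm{x},j}\|_{\Sigma_{\mathrm{x},k}}$ is independent of $\tilde\theta_k$.
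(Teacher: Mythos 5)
Your proposal is correct and follows essentially the same route as the paper's proof: robustness in probability over the confidence ellipsoid combined with the Gaussian reformulation, the Kronecker-product identity to make the mean error affine in $\tilde{\theta}_k$ with an exact Cauchy--Schwarz worst case over $\Theta_{\delta,k}$, and a separate worst-case bound $\overline{h}_{j,k}$ on the standard-deviation term. Your closing observations on the source of the residual conservatism (separate bounding of mean and variance) and on tightness when $\Sigma_{\mathrm{x},0}=0$ also match the paper's remarks following the proof.
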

\begin{proof}
Define $\mathrm{vec}([\tilde{G}_{0,k},\tilde{G}_{u,k}])=\tilde{\theta}_k$. 
Given that $\prob{\tilde{\theta}_k\in\Theta_{\delta,k}}\geq \delta$ is independent of $w_{[0,N-1]}$, $x_0$, the chance constraints~\eqref{eq:chance_constraints} hold if
\begin{align}
H_{\mathrm{x},j}^\top\begin{bmatrix}G_{0,k},G_{u,k}\end{bmatrix}\begin{bmatrix}\overline{x}_0\\u_{[0,k-1]}\end{bmatrix}+c_{\tilde{p}}\|H_{\mathrm{x},j}\|_{\Sigma_{\mathrm{x},k}}\leq 1, 
\end{align}
 for all $\tilde{\theta}_k\in\Theta_{\delta,k}$, $j\in\mathbb{I}_{[1,r]}$ with $\Sigma_{\mathrm{x},k}=G_{\mathrm{w},k} \mathrm{diag}_k(\Sigma_{\mathrm{w}})G_{\mathrm{w},k}^\top+G_{0,k}\Sigma_{\mathrm{x},0}G_{0,k}^\top$. 
For the mean trajectory $\overline{x}_k$, the properties of the Kronecker product yield 
\begin{align*}
&H_{\mathrm{x},j}^\top[\tilde{G}_{0,k},\tilde{G}_{u,k}]\begin{bmatrix}
\overline{x}_0\\u_{[0,k-1]}
\end{bmatrix}
=\left(\begin{bmatrix}
\overline{x}_0\\u_{[0,k-1]}
\end{bmatrix}^\top\otimes H_{\mathrm{x},j}^\top\right)\tilde{\theta}_k\\
\leq &\left\|\left(\begin{bmatrix}
\overline{x}_0\\u_{[0,k-1]}
\end{bmatrix}^\top \otimes H_{\mathrm{x},j}^\top\right)\Sigma_{\theta,k}^{1/2}\right\|\|\Sigma_{\theta,k}^{-1/2}\tilde{\theta}_k\|\\
\stackrel{\eqref{eq:ellipse}}{\leq} & 
\sqrt{\chi^2_{nkm+n^2}(\delta)}\left\|\Sigma_{\theta,k}^{1/2}\left(\begin{bmatrix}
\overline{x}_0\\u_{[0,k-1]}
\end{bmatrix}\otimes H_{\mathrm{x},j}\right)\right\|. 
\end{align*}
Similarly, for the variance we obtain
\begin{align*}
&\Sigma_{\mathrm{x},0}^{1/2}\tilde{G}_{0,k}^\top H_{\mathrm{x},j}
=(\Sigma_{\mathrm{x},0}^{1/2} \otimes H_{\mathrm{x},j}^\top)\vec{\tilde{G}_{0,k}}\\
=&(\Sigma_{\mathrm{x},0}^{1/2} \otimes H_{\mathrm{x},j}^\top)[I_{n^2},0_{n^2\times kmn}]\tilde{\theta}_k.
\end{align*}
Correspondingly, we get
\begin{align*}
&\|\Sigma_{\mathrm{x},k}^{1/2}H_{\mathrm{x},j}\| 
=\left\|\begin{bmatrix}\mathrm{diag}_k(\Sigma_{\mathrm{w}}^{1/2})&0\\0&\Sigma_{\mathrm{x},0}^{1/2}\end{bmatrix}
\begin{bmatrix}G_{\mathrm{w},k}^\top\\ G_{0,k}^\top\end{bmatrix}H_{\mathrm{x},j}\right\|\\
=&\left\|\begin{bmatrix}\mathrm{diag}_k(\Sigma_{\mathrm{w}}^{1/2})G_{\mathrm{w},k}^\top H_{\mathrm{x},j}\\\Sigma_{\mathrm{x},0}^{1/2}\hat{G}_{0,k}^\top H_{\mathrm{x},j}+(\Sigma_{\mathrm{x},0}^{1/2} \otimes H_{\mathrm{x},j}^\top)[I_{n^2},0_{n^2\times kmn}]\tilde{\theta}_k\end{bmatrix}\right\|\leq\overline{h}_{j,k}
\end{align*}
with 
\begin{align}
\label{eq:convoluted_tightening_constant}
 \overline{h}_{j,k}:=&\max_{\tilde{\theta}_k\in\Theta_{\delta,k}}\left\|\begin{bmatrix}\mathrm{diag}_k(\Sigma_{\mathrm{w}}^{1/2})G_{\mathrm{w},k}^\top H_{\mathrm{x},j}\\\Sigma_{\mathrm{x},0}^{1/2}\hat{G}_{0,k}^\top H_{\mathrm{x},j}+(\Sigma_{\mathrm{x},0}^{1/2} \otimes H_{\mathrm{x},j}^\top)[I_{n^2},0_{n^2\times kmn}]\tilde{\theta}_k\end{bmatrix}\right\|.
\end{align}
By combining these bounds, we obtain
\begin{align*}
&H_{\mathrm{x},j}^\top(G_{0,k}\overline{x}_0+G_{u,k}u_{[0,k-1]})+c_{\tilde{p}}\|H_{\mathrm{x},j}\|_{\Sigma_{\mathrm{x},k}}\\
\leq& H_{\mathrm{x},j}^\top(\hat{G}_{0,k}\overline{x}_0+\hat{G}_{u,k}u_{[0,k-1]})+c_{\tilde{p}}\overline{h}_{j,k}\\
+&\sqrt{\chi^2_{nkm+n^2}(\delta)}\left\|\Sigma_{\theta,k}^{1/2}\left(\begin{bmatrix}
\overline{x}_0\\u_{[0,k-1]}
\end{bmatrix}\otimes H_{\mathrm{x},j}\right)\right\|,
\end{align*}
for all $\tilde{\theta}_k\in\Theta_{\delta,k}$. 
Thus, Inequality~\eqref{eq:convoluted_tightening} implies~\eqref{eq:chance_constraints}. 
\end{proof}
For the derived mean and variance bound, there exist parameters $\tilde{\theta}_k\in\Theta_{\delta,k}$, such that the bounds hold individually with equality (although the combined bound~\eqref{eq:convoluted_tightening} is not tight). 
Computing the constants~\eqref{eq:convoluted_tightening_constant} requires maximizing a quadratic function over an ellipsoid, which is an SDP~\cite[Prop.~2.2]{yildirim2006minimum}, and a simple upper bound is given by:  
\begin{align}
\label{eq:convoluted_tightening_constant_alternative}
&\overline{h}_{j,k}\leq \sqrt{\chi^2_{nkm+n^2}(\delta)}\|(\Sigma_{\mathrm{x},0}^{1/2} \otimes H^\top_{\mathrm{x},j})[I_{n^2},0_{n^2\times kmn}]\Sigma_{\theta,k}^{1/2}\|\nonumber\\
&+\left\|H_{\mathrm{x},j}^\top 
\begin{bmatrix}G_{\mathrm{w},k},\hat{G}_{0,k}\end{bmatrix}
\begin{bmatrix}\mathrm{diag}_k(\Sigma_{\mathrm{w}}^{1/2})&0\\0&\Sigma_{\mathrm{x},0}^{1/2}\end{bmatrix}\right\|.
\end{align}
Given the parameter estimates $\hat{\theta}_k$ with variance $\Sigma_{\theta,k}$ (Lemma~\ref{lemma:id_correlated}) and constants $\overline{h}_{j,k}$~\eqref{eq:convoluted_tightening_constant} for each prediction step $k\in\mathbb{I}_{[1,N]}$, we can pose the following problem: 
\begin{align}
\label{eq:stoch_opt_convoluted_robust}
&\min_{u_{[0,N-1]}\in\mathbb{U}^N}~\sum_{k=0}^{N-1}\|\overline{x}_{k+1}\|_Q^2+\|u_k\|_R^2\\
\text{s.t. }&H_{\mathrm{x},j}^\top\overline{x}_k+ \sqrt{\chi^2_{nkm+n^2}(\delta)}\left\|\Sigma_{\theta,k}^{1/2}\begin{bmatrix}
\overline{x}_0\\u_{[0,k-1]}
\end{bmatrix}\otimes H_{\mathrm{x},j}\right\|\nonumber\\
&\leq 1-c_{\tilde{p}}\overline{h}_{j,k},~ j\in\mathbb{I}_{[1,r]},~k\in\mathbb{I}_{[0,N]},\nonumber\\
&\overline{x}_{k}=\hat{G}_{0,k}\overline{x}_0+\hat{G}_{\mathrm{u},k}u_{[0,k-1]},~k\in\mathbb{I}_{[1,N]}.\nonumber 
\end{align}
\begin{corollary}
Suppose the conditions in Lemma~\ref{lemma:id_correlated} hold. 
Problem~\eqref{eq:stoch_opt_convoluted_robust} is a QP subject to second-order cone constraints. 
Any feasible input sequence $u_{[0,N-1]}$ for Problem~\eqref{eq:stoch_opt_convoluted_robust} satisfies the chance constraints~\eqref{eq:chance_constraints}.
\end{corollary}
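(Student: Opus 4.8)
The plan is to establish the two assertions separately: the structural claim that Problem~\eqref{eq:stoch_opt_convoluted_robust} is a quadratic program with second-order cone constraints, and the implication that feasibility enforces the chance constraints~\eqref{eq:chance_constraints}. I would dispatch the second claim first, since it is essentially a restatement of Lemma~\ref{lemma:convoluted_robust}. Substituting the prediction $\overline{x}_k=\hat{G}_{0,k}\overline{x}_0+\hat{G}_{\mathrm{u},k}u_{[0,k-1]}$ into the constraint of~\eqref{eq:stoch_opt_convoluted_robust} and moving the constant $c_{\tilde{p}}\overline{h}_{j,k}$ to the left-hand side reproduces exactly inequality~\eqref{eq:convoluted_tightening}. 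Hence any feasible $u_{[0,N-1]}$ satisfies~\eqref{eq:convoluted_tightening} for all $j\in\mathbb{I}_{[1,r]}$ and $k\in\mathbb{I}_{[0,N]}$, and Lemma~\ref{lemma:convoluted_robust} then yields~\eqref{eq:chance_constraints}.

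For the structural claim I would first treat the objective. Since $\overline{x}_0$ is a known quantity and each predicted mean $\overline{x}_k=\hat{G}_{0,k}\overline{x}_0+\hat{G}_{\mathrm{u},k}u_{[0,k-1]}$ is an affine function of the stacked decision vector $u_{[0,N-1]}$, the cost $\sum_{k}\|\overline{x}_{k+1}\|_Q^2+\|u_k\|_R^2$ is a convex quadratic in $u_{[0,N-1]}$, using $Q,R\succ 0$. This gives the quadratic-program part. Next I would verify that each tightened constraint is a second-order cone constraint: the term $H_{\mathrm{x},j}^\top\overline{x}_k$ is affine in $u_{[0,N-1]}$, and the scalar $c_{\tilde{p}}\overline{h}_{j,k}$ with $\overline{h}_{j,k}$ from~\eqref{eq:convoluted_tightening_constant} is a constant that is precomputed offline. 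The only nonlinear term is the Euclidean norm $\big\|\Sigma_{\theta,k}^{1/2}\big(\begin{bmatrix}\overline{x}_0\\u_{[0,k-1]}\end{bmatrix}\otimes H_{\mathrm{x},j}\big)\big\|$, and the key observation is that its argument is affine in $u_{[0,N-1]}$: the stacked vector depends affinely on the decision variable (with $\overline{x}_0$ as a constant offset and $u_{[0,k-1]}$ a linear selection), the Kronecker product with the fixed vector $H_{\mathrm{x},j}$ is a linear operation, and left-multiplication by the constant matrix $\Sigma_{\theta,k}^{1/2}$ preserves affineness. Thus the argument can be written as $M_{j,k}u_{[0,N-1]}+b_{j,k}$ for a suitable matrix $M_{j,k}$ and vector $b_{j,k}$, and the constraint takes the canonical form (affine term) plus $\|M_{j,k}u_{[0,N-1]}+b_{j,k}\|\leq$ (constant).

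The main (and only genuinely nontrivial) step is this affineness bookkeeping through the Kronecker structure: one must make explicit the linear map that extracts $u_{[0,k-1]}$ from the full decision vector $u_{[0,N-1]}$ and carry the identity $\big(\begin{bmatrix}\overline{x}_0\\u_{[0,k-1]}\end{bmatrix}\otimes H_{\mathrm{x},j}\big)$ through to a matrix acting on $u_{[0,N-1]}$, being careful that only the first $k$ input blocks enter step $k$. Once this is set up, combining the convex quadratic objective, the linear input constraints $u_{[0,N-1]}\in\mathbb{U}^N$ (under the standing assumption that $\mathbb{U}$ is polytopic), and the second-order cone constraints above establishes the stated problem class and completes the proof.
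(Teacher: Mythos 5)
Your proof is correct and follows essentially the same route as the paper: the chance constraints are obtained by combining Lemma~\ref{lemma:id_correlated} with Lemma~\ref{lemma:convoluted_robust} applied to each $k$ and $j$ individually, and the structural QP/second-order-cone claim is the routine affineness bookkeeping you describe (which the paper does not even spell out). The only point the paper makes explicit that you leave implicit is that, because the chance constraints~\eqref{eq:chance_constraints} are imposed individually for each $k$, the stepwise application of Lemma~\ref{lemma:convoluted_robust} only needs the marginal bound $\prob{\tilde{\theta}_k\in\Theta_{\delta,k}}\geq \delta$ and not the joint distribution of $(\theta_k,\theta_{k+1})$.
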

\begin{proof}
The chance constraints~\eqref{eq:chance_constraints} follow directly from Lemmas~\ref{lemma:id_correlated} and \ref{lemma:convoluted_robust}.
This result exploits that we have individual chance constraints~\eqref{eq:chance_constraints} for each $k\in\mathbb{I}_{[1,N]}$ and hence do \textit{not} require the joint distribution of $(\theta_k,\theta_{k+1})$. 
\end{proof}
Problem~\eqref{eq:stoch_opt_convoluted_robust} addresses the chance constraints. 
A performance bound may require a modified cost~\cite{furieri2021near,huang2021robust}.

\subsection{Existing approaches using multi-step predictor bounds} 
\label{sec:data_driven_4}
In the following, we detail some approaches from the literature that, similar to Lemma~\ref{lemma:convoluted_robust}, directly exploit parametric error bounds on the multi-step predictor.

\subsubsection*{Impulse response models}
Early MPC implementations, especially in process control, are focused on impulse/step response models~\cite{richalet1978model,cutler1980dynamic,lundstrom1995limitations}. 
Corresponding \textit{robust} designs for FIR models with parametric uncertainty have also been derived using $\min-\max$ problems~\cite{zheng1993robust}. 
By additionally considering set-membership estimation to identify the parametric error, a robust data-driven/adaptive MPC for FIR models is derived in~\cite{tanaskovic2014adaptive}. 
For a more general parametrization with OBFs~\cite{heuberger1995generalized}, and assuming only output measurement noise, robust/chance constraints under parametric uncertainty can also be enforced without conservatism~\cite{heirung2017dual,soloperto2019dual}. 
Notably, the parameter dimension with OBF can be significantly smaller, which reduces the computational complexity of the vertex enumeration.

\subsubsection*{Data-driven/enabled predictive control}
The direct identification of multi-step predictors~\eqref{eq:model_convoluted_split} has already been proposed in SPC~\cite{favoreel1999spc}.  
More recently, a variation of this approach known as \textit{data-driven/enabled predictive control} has received significant attention~\cite{berberich2020data,coulson2021distributionally}. 
The basic idea is to specify an implicit model by stacking the measured data in a Hankel matrix, which is directly used within the optimization problem, instead of the usual sequential model identification and predictive control. 
In~\cite{huang2021robust,coulson2021distributionally}, it is shown that this implicit model also allows for simple reformulations of the $\min-\max$/distributionally-robust problem, similar to Lemma~\ref{lemma:convoluted_robust}.  
In a related result, \cite{yin2021maximum,yin2021data} provide an MLE with simple bounds on the prediction. 
Notably, most of these results focus on measurement noise instead of disturbances, compare the discussion in Section~\ref{sec:SysID_3} on system identification.  
When comparing data-driven/enabled predictive control approaches~\cite{coulson2021distributionally,berberich2020data,huang2021robust} with standard model-based MPC, we note that the difference is \textit{not} only attributable to the fact that a \textit{direct} data-driven design is used (cf.~\cite{dorfler2021bridging} for a detailed discussion on potential benefits); but also due to the fact that \textit{multi-step predictors} are (implicitly) specified.

\subsubsection*{Set-membership estimation} In~\cite{terzi2019learning,lauricella2020data,terzi2022robust}, set-membership estimation is used for
multi-step predictors of the form~\eqref{eq:model_convoluted_split}. 
These approaches consider polytopic bounded noise and disturbances, resulting in polytopic sets $\tilde{\theta}_k\in\Theta_{\delta,k}$.  
Prediction bounds can then be directly obtained with linear inequality constraints (cf.~\cite{lauricella2020data}), which are guaranteed to be less conservative than corresponding bounds from a state space model~\cite[Thm.~1]{terzi2022robust}. 
In contrast to Problem~\eqref{eq:stoch_opt_convoluted_robust}, the resulting MPC approaches over-approximate the prediction error with a constant bound to allow for simpler robust MPC methods~\cite{terzi2019learning} and ensure recursive feasibility~\cite{terzi2022robust}.  

\subsubsection*{System level synthesis (SLS)}
SLS is a method to jointly optimize over feedback policies by parametrizing the problem using the (closed-loop) multi-step predictors $G_{\mathrm{u}}$,$G_0$, $G_{\mathrm{w}}$, which are called \textit{system responses}~\cite{chen2020robust}, compare also the Input-Output Parametrization (IOP)~\cite{furieri2021near}.  
In~\cite{furieri2021near}, multi-step predictors~\eqref{eq:model_convoluted} are directly identified and a simple constraint tightening, similar to Lemma~\ref{lemma:convoluted_robust}, is obtained.  
In~\cite{bujarbaruah2020robust,chen2020robust}, the authors consider parametric errors in the state space formulation and then (partially) over-approximate it with a parametric uncertainty on the multi-step predictors~\eqref{eq:model_convoluted} to allow for a similarly simple robustification.  
Specifically, similar to Lemma~\ref{lemma:convoluted_robust}, the constraint tightening in~\cite[Equ.~(13)]{bujarbaruah2020robust} is proportional to a weighted norm of 
$\begin{bmatrix}
\overline{x}_0\\u_{[0,k-1]}
\end{bmatrix}$.

\subsection{Discussion}
\label{sec:data_driven_3} 
\subsubsection*{Complexity and conservatism}
 For both model parametrizations, the considered derivation (Sec.~\ref{sec:data_driven_1}/\ref{sec:data_driven_2}) introduces conservatism by: a) separately bounding the variance and mean (which allows for a pre-computation of the variance related terms); b) using $\prob{\tilde{\theta}_k\in\Theta_{\delta,k}}\geq \delta$ (Lemma~\ref{lemma:id_correlated}) instead of directly working with the distribution $\theta_k\in\mathcal{N}(\hat{\theta}_k,\Sigma_{\theta,k})$. 
Apart from this simplification, the derivation in Lemma~\ref{lemma:convoluted_robust} is tight. 
Another major benefit of Problem~\eqref{eq:stoch_opt_convoluted_robust} is its simplicity, i.e., compared to Problem~\eqref{eq:stoch_opt_convoluted} we only require the following changes:
\begin{itemize}
\item The probability level is increased with $\tilde{p}>p$.
\item The constraint is further tightened by a scaled norm of the vector 
$\Sigma_{\theta,k}^{1/2}\begin{bmatrix}
\overline{x}_0\\u_{[0,k-1]}
\end{bmatrix}\otimes H_{\mathrm{x},j}\in\mathbb{R}^{nkm+n^2}$. 
\item Constants $\overline{h}_{j,k}\geq \|H_{\mathrm{x},j}\|_{\Sigma_{\mathrm{x},k}}$ can be computed offline using~\eqref{eq:convoluted_tightening_constant} or \eqref{eq:convoluted_tightening_constant_alternative}.
\end{itemize} 
Regarding the computational complexity, the additional vector-norm in the constraint tightening results in second-order cone constraints. 
In case of polytopic parameter sets $\Theta_{\delta,k}$, a similar derivation results in a linearly constrained QP. 

Considering the state space problem~\eqref{eq:stoch_opt_robust}, standard tube-based approaches to robustly account for parametric uncertainty (cf.~\cite{kouvaritakis2016model,primbs2009stochastic,fleming2014robust,lorenzen2019robust,Koehler2019Adaptive,lu2021robust}) also result in a QP subject to linear or second-order-cone constraints, depending if the set $\Theta$ is a polytope or ellipsoid. 
Notably, these approaches retain the sparsity structure. 
Tube-based methods require an additional offline step to design a suitable polytope/ellipsoid for the tube parametrization. 
Considering specifically the polytopic setting, a more complex polytope may reduce the conservatism but significantly increases the computational complexity (cf.~\cite[Tab.~5.2]{kouvaritakis2016model}), thus resulting in a non-trivial offline tuning problem. 
In addition, there exist degrees of freedom in the tube propagation that require a trade-off between conservatism and computational complexity (cf.~\cite[Tab.~1]{Koehler2019Adaptive}). 
The sequential nature of the tube-propagation with its fixed parametrization can result in significant conservatism, which is difficult to quantify a-priori.
Less conservative/tight reachability bounds for state space models with parametric uncertainty require more sophisticated robust control tools, e.g., integral quadratic constraints. 
However, exploiting such tools in MPC is part of ongoing research~\cite{schwenkel2021model}. 

\subsubsection*{Receding horizon implementation}
One of the core principles of MPC is generating feedback by repeatedly solving open-loop optimization problems~\cite{mayne2000constrained,kouvaritakis2016model,rawlings2017model}.
Thus, \textit{recursive feasibility} is of paramount importance, which can be ensured with well-established methods for state space formulations~\cite{mayne2000constrained,kouvaritakis2016model,rawlings2017model}. 
For the following discussion on recursive feasibility, we restrict ourselves to the \textit{robust} setting, as the unbounded disturbances in stochastic MPC require extra care~\cite{primbs2009stochastic,farina2016stochastic,hewing2020recursively}.
A crucial feature of the sequential disturbance propagation in tube-based MPC (cf.~\cite{kouvaritakis2016model,fleming2014robust,lorenzen2019robust,Koehler2019Adaptive,lu2021robust}) is the fact that recursive feasibility directly holds, assuming a proper parametrization and terminal set constraint. 
On the other hand, directly utilizing the multi-step predictor bounds (cf. Lemma~\ref{lemma:convoluted_robust}), does in general \textit{not} ensure recursive feasibility. 
This issue can be addressed using: shrinking/adaptive horizons or a multi-rate/multi-step implementations~\cite{bujarbaruah2020robust,chen2020robust,terzi2022robust}; intersecting different over-approximations of the reachable set~\cite{maddalena2021kpc,lauricella2020data}; 
considering FIR/OBF models~\cite{tanaskovic2014adaptive,heirung2017dual,soloperto2019dual} (cf. also Volterra series~\cite{gruber2013computationally,genecili1995design}). 
This issue arises partially due to incompatible models, e.g., $\hat{G}_{0,k}\hat{G}_{0,k}\neq \hat{G}_{0,2k}$, and has similarities to the known problems associated with move-blocking MPC~\cite{gondhalekar2010least}.
Due to the different closed-loop implementation, it is not a-priori clear if the reduced \textit{open-loop} conservatism of multi-step predictors (Sec.~\ref{sec:data_driven_2}) results in improved \textit{closed-loop} performance.

\subsubsection*{Input-output (IO) models}
The assumption of noisy state measurements $\tilde{x}_k$ can be relaxed to noisy output measurements $\tilde{y}_k=y_k+\eta_k\in\mathbb{R}^{n_{\mathrm{y}}}$. 
For example, assuming (final state) observability~\cite[Def.~4.29]{rawlings2017model} (e.g., ARX model), a non-minimal state is given by $x_k=(y_{[k-\nu,k-1]},u_{[k-\nu,k-1]})\in\mathbb{R}^{\nu(m+n_{\mathrm{y}})}$ with the lag $\nu\leq n$~\cite{coulson2021distributionally}. 
Hence, a noisy state measurement $\tilde{x}_k=x_k+\epsilon_k$ is directly available with the correlated(!) noise $\epsilon_k=(\eta_{[k-\nu,k-1]},0_{ m\nu\times 1})$. The correlation requires a modification of the MLE (Sec.~\ref{sec:SysID}), cf.~\cite{yin2021maximum,yin2021data}.
The MLE can also be directly applied to noisy output measurements using a Kalman filter~\cite{aastrom1971system,gibson2005robust}. 
The fact that the resulting state space model is not necessarily \textit{minimal} can complicate the offline design required in tube-based state space approaches~\cite{kouvaritakis2016model,fleming2014robust,lorenzen2019robust,Koehler2019Adaptive,lu2021robust}.
On the other hand, a minimal state space representation is \textit{not} directly relevant for multi-step predictors and most of these approaches consider the IO setting~\cite{tanaskovic2014adaptive,heirung2017dual,soloperto2019dual,berberich2020data,coulson2021distributionally,huang2021robust,yin2021maximum,yin2021data,iannelli2021experiment,terzi2019learning,lauricella2020data,terzi2022robust,furieri2021near,favoreel1999spc}.
It is tempting to distinguish "standard" MPC schemes~\cite{mayne2000constrained,kouvaritakis2016model,rawlings2017model} and more recent "data-driven" MPC methods~\cite{berberich2020data,coulson2021distributionally} in terms of input-state vs. IO setting. 
However, based on the exposition in this paper, we postulate that a major difference is the model parametrization: sequential state space models vs. (implicit) direct multi-step predictors. 

\subsubsection*{Nonlinear systems}
While we focus on linear systems throughout the paper, similar conclusions can be drawn for nonlinear systems, where some of the differences become even more pronounced. 
Nonlinear multi-step predictors, i.e., $x_{i+k}=f_k(x_i,u_{[i,i+k-1]},w_{[i,i+k-1]})$, can be identified using linearly parametrized models, e.g., Volterra series~\cite{genecili1995design,gruber2013computationally}, or non-parametric approaches, e.g., kernel methods~\cite{maddalena2021kpc,thorpe2022data}, 
analogous to nonlinear state-space models.\footnote{%
The direct identification of the impulse response used in~\eqref{eq:model_convoluted_split} can also be viewed as a \textit{non-parametric} problem, especially for $N\rightarrow\infty$~\cite{aastrom1971system}.} 
The fact that such models do \textit{not} require any conservative/complex disturbance propagation is even more critical in the nonlinear case. 
However, the complexity issues of multi-step predictors for large prediction horizons $N$ are also amplified for nonlinear problems. 
Furthermore, the difference in the prior information for state space models and multi-step predictors are more important in the nonlinear setting.

\section{Conclusion}
\label{sec:conclusion}
We have provided a tutorial-style exposition of data-driven stochastic predictive control using state space models or multi-step predictors.
In particular, we have investigated the challenges associated with parametric uncertainty in both model parametrizations. 
Tube-based methods for state space models need to trade-off computational complexity and conservatism, and require additional offline designs with various free design parameters. 
On the other hand, we derived simple (partially tight) bounds for multi-step predictors (Lemma~\ref{lemma:convoluted_robust}), which do not suffer from similar conservatism.
However, an a-priori comparison of the \textit{closed-loop} performance in a receding horizon implementation is challenging due to the required modifications with multi-step predictors (e.g., multi-rate implementation). 
A direct comparison of the computational complexity is also difficult, as even within tube-based approaches the complexity can vary by orders of magnitude (cf.~\cite[Tab.~1]{Koehler2019Adaptive}). 
Nonetheless, tube based approaches retain the sparse structure of the state space formulation and hence result in a reduced computational complexity for long horizons $N$ (cf.~\cite[Sec.~3]{kouzoupis2018recent}). 
In addition to these differences, main factors that should guide the choice of parametrizations are: 
\begin{itemize}
\item magnitude of the parametric uncertainty;
\item 
simplicity in receding horizon implementation;
\item 
scalability for long prediction horizons;
\item 
different model priors and the prevalence of disturbances or noise in the parameter identification.
\end{itemize}  
\subsubsection*{Open-issues}
Lemma~\ref{lemma:convoluted_robust} could be improved by directly using $\theta_k\sim\mathcal{N}(\hat{\theta}_k,\Sigma_{\theta,k})$.  
Numerical comparisons regarding computational complexity and closed-loop performance would be beneficial. 
Given the respective benefits, unifying the parametrizations in a hybrid model structure is a promising research direction (cf., \textit{DiRec/DIRMO} strategies~\cite{taieb2012review}).

\bibliographystyle{ieeetran}  
\bibliography{Literature}  

\end{document}